\newtheorem{theorem}{Theorem}[section]
\newtheorem{lemma}[theorem]{Lemma}
\newenvironment{proof}[1][Proof]{\begin{trivlist}
\item[\hskip \labelsep {\bfseries #1}]}{\end{trivlist}}
\newenvironment{definition}[1][Definition]{\begin{trivlist}
\item[\hskip \labelsep {\bfseries #1}]}{\end{trivlist}}
\newcommand{\qed}{\nobreak \ifvmode \relax \else
      \ifdim\lastskip<1.5cm \hskip-\lastskip
      \hskip1.5cm plus0cm minus0.5cm \fi \nobreak
      \vrule height0.75cm width0.5cm depth0.25cm\fi}
\begin{document}
\title{A Geometric Approach to Rotor Failure Tolerant Trajectory Tracking Control Design for a Quadrotor }

\author{Ashutosh Simha$^1$, Sharvaree Vadgama$^1$ and Soumyendu Raha$^1$
\thanks{$^1$ Department of Computational and Data Sciences, Indian Institute of Science, Bangalore-12, India
}
}
\maketitle
\begin{abstract}
This paper addresses the problem of designing a trajectory tracking control law for a quadrotor UAV, subsequent to complete failure of a single rotor. 
The control design problem considers the reduced state space which excludes the angular velocity and orientation about the vertical body axis. The proposed controller enables the quadrotor to track the orientation of this axis, and consequently any prescribed position trajectory using only three rotors. The control design is carried out in two stages. First, in order to track the reduced attitude dynamics, a geometric controller with two input torques is designed on the Lie-Group $SO(3)$. This is then extended to $SE(3)$ by designing a saturation based feedback law, in order to track the center of mass position with bounded thrust. The control law for the complete dynamics achieves exponential tracking for all initial conditions lying in an open-dense subset. The novelty of the geometric control design is in its ability to effectively execute aggressive, global maneuvers despite complete loss of a rotor. Numerical simulations on models of a variable pitch and a conventional quadrotor have been presented to demonstrate the practical applicability of the control design. 
\end{abstract}
\begin{keywords} 
Geometric Control, Reduced Attitude Tracking, Rotor Failure, Quadrotors
\end{keywords}
\section{Introduction}
Quadrotor Unmanned aerial vehicles (UAV) have been increasingly envisaged in defense, industrial and civil applications due to its simplified mechanical design and ease of maneuvering. The quadrotor consists of two pairs of symmetrically located counter rotating propeller blades which independently generate aerodynamic thrust along a common axis, in order to regulate the overall applied force and torques on the UAV. In typical flight missions, the independent rotor thrusts are regulated in tracking the center of mass position and heading angle of the UAV.
A basic understanding of the quadrotor dynamics and control design can be found in (\cite{beard}) and some recent research can be found in (\cite{andrea1, andrea2, andrea3, vijay_snap, vijay_avian, vijay2}). 
In conventional quadrotors, the thrust generated by each rotor is regulated by varying its speed. Such an actuation mechanism has a low control bandwidth due to saturation limits in the electro-mechanical circuit driving the rotor. Further, the rotor thrust needs to be strictly positive, thereby impairing the flight envelop. These factors have motivated the development of \textit{variable pitch} quadrotors (\cite{cutler1}, \cite{cutler2}) in which rotor thrust is regulated by varying the pitch angle of the propeller blades, while maintaining a constant rotor speed. This mechanism has a significantly higher actuation bandwidth than conventional rotors. Further, the blade pitch angles can be reversed to enable negative thrust generation. It has been shown in \cite{cutler3}, \cite{energies}, and \cite{kothari}, that the variable pitch mechanism appreciably enhances the flight envelop, thereby enabling aggressive maneuvers. 

The theoretical focus of this paper is to design a control law for a quadrotor, subsequent to complete failure of a single rotor.  With three functioning rotors, only a three dimensional submanifold of the output space can be completely regulated. A possible tracking solution is to relinquish control of the angular rate about the thrust axis, and choose the orientation of the thrust axis (i.e. reduced attitude) and net thrust as tracking outputs. 

The main new results in this paper over the existing studies, reviewed in {\it Related work}, are summarized as follows:
\begin{itemize}
\item The proposed control law can track globally defined reduced attitude and position trajectories, with only two control torques and a scalar thrust input.
\item The control law is free of singularities due to attitude parameterizations or input-output decoupling.
\item In the presence of bounded uncertainties, the tracking errors almost-globally converge at an exponential rate, to an arbitrarily small neighborhood of the origin.
\end{itemize}

\subsection{Related work}
While there is a significant amount of research in fault tolerant control of quadrotors with partial rotor loss, there are only a few results in case of complete rotor failure. In \cite{andrea} and \cite{andrearelaxed}, the authors present relaxed hover solutions with multiple rotor failures. The attitude dynamics are linearized about a hovering point where the yaw rate is a non-zero constant. In order to stabilize the position of the UAV, the orientation of the vertical axis (i.e. reduced attitude) and net rotor thrust is regulated. In \cite{landing1} and \cite{landing2}, a PID and back-stepping approach is used for emergency landing in case of rotor failure. In \cite{lanzonifac} and \cite{lanzonjgcd}, the authors present a hierarchical control design in which the inner loop controls the reduced attitude and the outer loop controls the position. The inner loop consists of a robust feedback linearization based controller and the outer loop is a  $H_{\infty}$ based controller for the translational dynamics, linearized about a hover point. In \cite{peng} and \cite{akhtar}, the authors present static and dynamic feedback linearization based controllers to regulate the reduced attitude and position of the quadrotor. Control designs based on small angle or linear approximations restrict the motion of the quadrotor to near-hover maneuvers. Further, feedback linearization based control laws mentioned above, encounter singularities when the roll and pitch angles are $\pi/2$ or when the net thrust is zero. In order to avoid this, the initial state errors have to be restricted within a sufficiently small neighborhood of the origin. These factors render the existing fault tolerant control designs ineffective in tracking global trajectories or performing aggressive maneuvers (such as attitude recovery from an inverted pose). It is imperative to understand that post rotor failure, the orientation of the quadrotor may undergo large deviations from the operating point, thereby necessitating global maneuvering capability. 

In recent times, globally stabilizing geometric controllers which exploit the intrinsic structure of the underlying manifold, have been developed. Here, singularities due to attitude parameterizations or input-output decoupling are avoided. Approaches such as that in \cite{maithri} and \cite{bullo} stabilize mechanical systems on Lie Groups using nonlinear proportional-derivative (PD) control. One such pioneering control design for quadrotors on the Lie Group $SE(3)$ has been presented in \cite{tlee} and \cite{tlee_tac}.  Reduced attitude stabilization to a fixed point on $S^2$  with two control torques is presented in \cite{bulloreduced}. In \cite{sphere}, the primary axis of a rigid body on $S^2$ as well as the angular velocity about this axis, are tracked using three independent torques. In \cite{teel}, global reduced attitude tracking with three torques is achieved by constructing a synergistic family of potential functions on $S^2$. To the best of our knowledge, none of the above mentioned control laws are suitable for trajectory tracking with three functioning rotors (i.e. two torque inputs and a net thrust).

\subsection{Proposed control design}
First, a control law is developed on $SO(3)$ in order to track a commanded reduced attitude trajectory. A back-stepping feedback law for the two torques about the horizontal body axes of the quadrotor is designed based on the geometric structure of $TS^2$ (on which the reduced attitude dynamics evolve). The back-stepping law is preferred over standard geometric PD controllers as they are not applicable in case of reduced dimensional input space. Subsequently, a saturation based feedback law is designed for the translational dynamics, in order to track a prescribed position trajectory with bounded thrust. This also ensures that the commanded thrust vector does not vanish, thereby ensuring that the reference reduced attitude trajectory is well defined. The control law is further robustified in order to account for propeller-induced gyroscopic moment (which is typically neglected when all four rotors are functional), and rotational drag. The tracking errors are shown to exponentially converge to an arbitrarily small open neighborhood of the origin. The performance of the controller is first demonstrated through simulations on a variable pitch quadrotor which is capable of negative thrust generation. Then, the same control law is simulated with a strictly positive rotor thrust constraint, in order to demonstrate its effectiveness on a conventional quadrotor. In this case however, aggressive trajectory tracking is successful provided that the angular velocity about the vertical axis is high enough. 

The paper is organized as follows. In section 2., the nonlinear dynamics of the quadrotor is presented. Section 3. contains the formulation of the geometric control law. In section 4., simulation results with the proposed control law have been presented, which is then followed by concluding remarks. 

\section{Problem Formulation}
\begin{figure}[h]
 \includegraphics[width=0.4\textwidth]{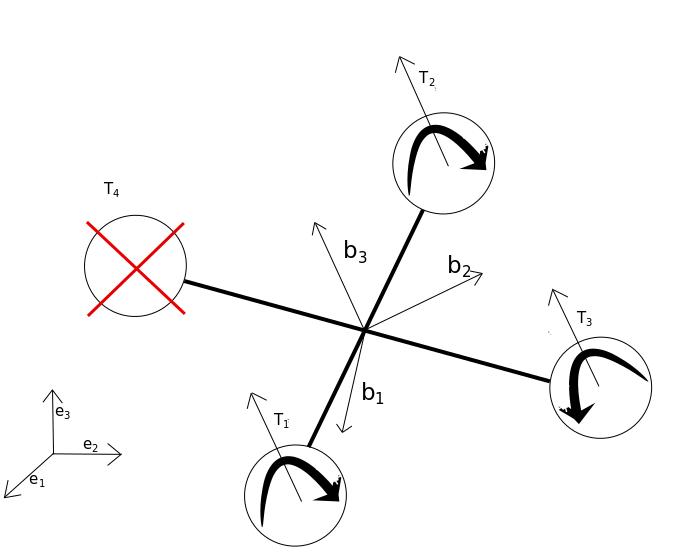}
\caption{Quadrotor model}
\label{fig:quad}
\end{figure} 
\subsection{Quadrotor Dynamics}
Consider the quadrotor as shown in Fig.\ref{fig:quad}.
 Let $\{\vec{e}_1,\vec{e}_2,\vec{e}_3\}$ denote the inertial frame and $\{\vec{b}_1,\vec{b}_2,\vec{b}_3\}$ denote the body frame. The four identical rotors are designed to generate thrusts $T_1,~T_2,~T_3,~T_4$ along $\vec{b}_3$. In this paper, it is assumed that the fourth rotor has been completely disabled post fault detection (i.e. $T_4\equiv 0$). The origin of the body frame is located at the center of mass. $R\in SO(3)$ is the rotation matrix from the $\vec{b}$ frame to $\vec{e}$ frame, denoting the attitude of the quadrotor. $\Omega$ is the angular velocity in the body frame. $x$ and $v$ denote the position and velocity of the center of mass. $m$ denotes the mass of the quadrotor, $J=diag(J_1,J_2,J_3)$ is the moment of inertia matrix in the body frame, $J_r$ is the inertia of the rotors, and $\tau_d$ is the aerodynamic rotational drag. The total thrust and torque due to the rotors is represented by $f$ and $M$ respectively. The first and second rotor spin clockwise and the third spins anti-clockwise at angular speeds $\omega_1,\omega_2,\omega_3$ respectively. The rigid body equations of motion are derived using the \textit{Euler Poincar\`e} formalism on $SE(3)$ (the configuration manifold of the quadrotor) as follows:
 \begin{equation}
 \begin{matrix}
 \dot{x}=v \\
 m\dot{v}=-mge_3+fRe_3 \\
 \dot{R}=R\hat{\Omega}\\
  J\dot{\Omega}=J\Omega\times\Omega+g_r-\tau_d+M
    \end{matrix}
 \label{quad}
 \end{equation}where $\hat{.}:\mathbb{R}^3 \to \mathfrak{so}(3)$ is defined as $\hat{x}y=x\times y,~x,y\in\mathbb{R}^3$. We will denote $(.)^\vee$ as its inverse map throughout the paper and $e_i$ as the representation of $\vec{e}_i$. 
 
 When all four rotors are functioning, the gyroscopic moment $g_r$ generated by their different rotational speeds is typically neglected. However, $g_r$ may be significant in case of rotor failure and is therefore included. This effect has been modeled as an additional moment by the following equation (\cite{gr}). 
 \begin{equation}
 g_r=J_r(\Omega\times e_3)(\omega_1-\omega_2+\omega_3)
 \label{g_r}
 \end{equation}where $\omega_i$ is the speed of the $i^th$ rotor.
 
 The aerodynamic drag torque $\tau_d$ is hard to model as it depends on the quadrotor profile. The model used here has been adopted from \cite{andrearelaxed}, and is based on the form drag of a translating object (\cite{cormick}) which is quadratic in the vehicle's angular velocity as:
 \begin{equation}
 \tau_d=||\Omega||K_d\Omega,
 \label{tau_d}
 \end{equation}where $K_d$ is a positive-definite matrix. 
 
 Let $T_i$ and $D_i$ denote the thrust and drag induced torque generated by the $i^{th}$ rotor. $f$ and $M$ are then obtained for the 'X' configuration of the quadrotor as:
 \begin{eqnarray}
 f&=&T_1+T_2+T_3    \nonumber  \\ 
 M_1&=&d(T_1 -T_2-T_3)  \nonumber  \\ 
 M_1&=&d(T_1+T_2-T_3) \nonumber  \\  
 M_3&=&D_1-D_2+D_3
\label{f,M}
 \end{eqnarray}

In case of a variable pitch quadrotor, the thrust in each rotor is varied by changing the collective blade pitch angle of the propellers, while maintaining constant rotor speed. The relation between rotor thrust and drag, with the blade pitch angle has been adopted from \cite{cutler1} as: 
\begin{eqnarray}
T_i&=&b_L\omega_i^2\gamma_i \nonumber \\ 
D_i&=& b_{D_1}\omega_i^2+b_{D_2}\omega_i^2\gamma_i^2+b_{D_3}\omega_i\gamma_i
 \label{TiandDi}
  \end{eqnarray}
where $\gamma_i$ is the pitch angle of the $i^th$ rotor, and $b_L,~b_{D_1},~b_{D_2},~b_{D_3}$ are constants which depend on the aerodynamic profile of the propeller. Though in principle one can use the rotor speed as an additional control input, this is not advisable due to significant aerodynamic uncertainties when $\omega_i$ is low or rapidly fluctuating.

\section{Geometric Control Design}
 \begin{figure}[h]
 \includegraphics[width=0.5\textwidth]{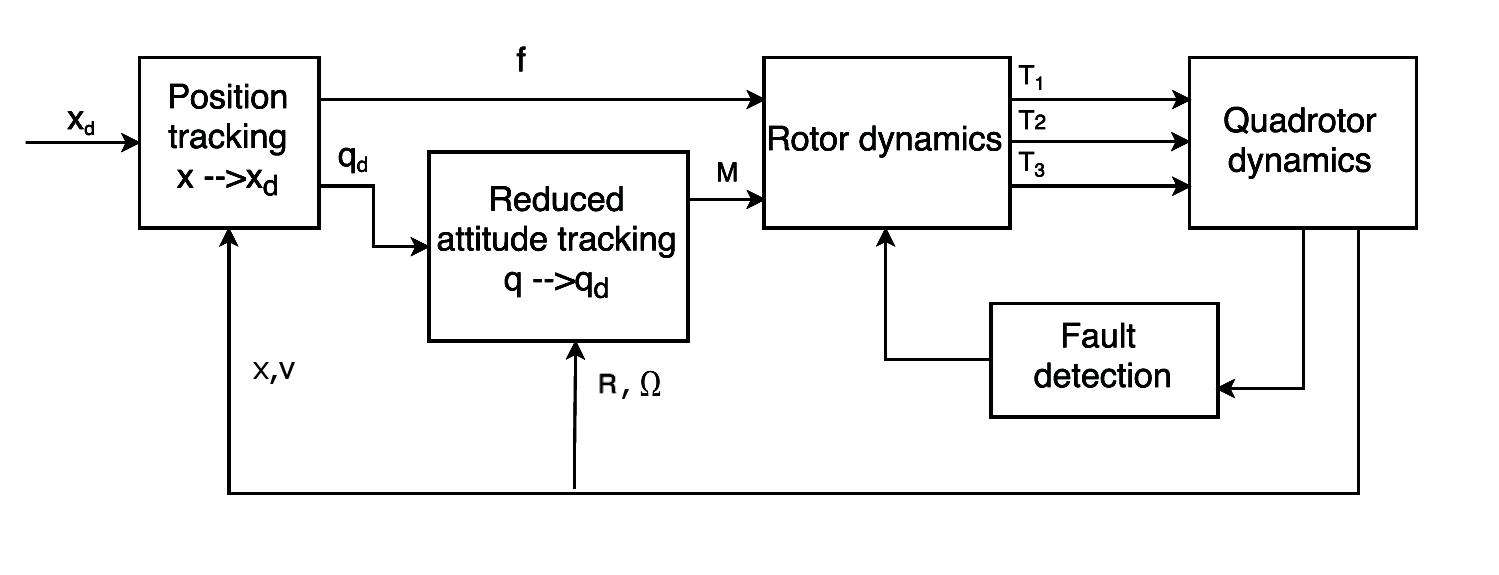}
\caption{Controller Structure}
\label{fig:block}
\end{figure} 
With three functioning rotors, the control inputs are chosen as $f$ and $U=[U_1,U_2]^T=[M_1/J_1,M_2/J_2]^T$. Control of the angular velocity about the vertical axis is relinquished. First, a reduced attitude controller with input $U$ is designed on $S^2$, which is then extended to a trajectory tracking controller on $S^2\times \mathbb{R}^3$ by designing a control law for $f$. 
 \subsection{Reduced Attitude Tracking Controller on $SO(3)$}
The reduced attitude of the quadrotor is defined as the pointing direction of the thrust axis (i.e. body $z$ axis) which is obtained via the projection $\pi:SO(3)\to \mathbb{S}^2$ (as introduced in \cite{bulloreduced}), defined as
\begin{equation}
\pi(R)=Re_3
\end{equation}Let $q=Re_3$ denote the reduced attitude , and $\vec{r}_1 =Re_1$, $\vec{r}_2=Re_2$ denote the two horizontal body axes. The dynamics of $q$ can be obtained from (\ref{quad}) as:
\begin{eqnarray}
\dot{q}&=&\Omega_1\vec{r}_2-\Omega_2\vec{r}_1  \nonumber \\
\dot{\Omega}_1&=&\frac{(J_2-J_3)}{J_1}\Omega_2\Omega_3+U_1+d_1(\Omega) \nonumber \\ 
\dot{\Omega}_2&=&\frac{(J_3-J_2)}{J_2}\Omega_1\Omega_3+U_2+d_2(\Omega) 
\label{q_dynamics}
\end{eqnarray}Here, $d=[d_1,d_2]^T$ denotes the uncertainties due to propeller induced gyroscopic torque and rotational drag. From their respective forms as described in (\ref{g_r}) and (\ref{tau_d}), we can bound $d$ as:
\begin{equation}
||d||_2\leq(||\tilde{\Omega}||_2+||\tilde{\Omega}||^2_2)\Delta
 \label{d_bound}
 \end{equation}where $\Delta=max~\{J_r\omega_1,\lambda_{max}(K_d) \}$ and $\tilde{\Omega}=[\Omega_1,\Omega_2]^T$. Let $q_d(t)\in \mathbb{S}^2$ denote the reference reduced attitude trajectory. Motivated by \cite{leeso3} and \cite{sphere}, we define a reduced attitude error function as:
\begin{equation}
\Psi(q,q_d)=2-\dfrac{2}{\sqrt{2}}\sqrt{1+q_d^Tq}
\label{psi}
\end{equation}This choice is motivated by the fact that the left trivialized differential of the error function does not vanish when $q\to-q_d$. However, this happens in case of the conventional error function on $\mathbb{S}^2$ as defined in \cite{bullo}, thereby rendering the tracking performance poor. It can be observed that the error function satisfies:

\begin{equation}
\begin{matrix}
\Psi(q,q_d)\in [0,2],~\\ \\
\Psi(q,q_d)=0\iff q=q_d
\end{matrix}
\end{equation}

With $q_d$ constant, the differential of $\Psi(q,q_d)$ along $T_q^*\mathbb{S}^2$ is computed via the tangent map corresponding to the projection
  
   $\pi_{\mathbb{S}^2}:\mathbb{R}^3-\{\vec{0}\} \to \mathbb{S}^2$ where $\pi_{\mathbb{S}^2}(x)=x/||x||_2$, as
\begin{equation}
d_1\Psi_{\mathbb{S}^2}(q,qd)=\dfrac{1}{\sqrt{2}\sqrt{1+q_d^Tq}}q\times (q\times q_d)
  \label{d_psi}
  \end{equation}We denote the reduced attitude error vector as:
  \begin{equation}
   e_q:= d_1\Psi_{\mathbb{S}^2}(q,qd)
   \end{equation}  Note that this quantity is well defined as long as $q_d^Tq>-1$ i.e. when the angle between them is less than $180^{\circ}$. We now establish the following relation between $e_q$ and $\Psi$.
  
  \begin{lemma}  
  \begin{equation}
  ||e_q||_2^2\leq \Psi \leq 2  ||e_q||_2^2,~\forall (q,q_d)\in \Psi^{-1}[0,2)
  \end{equation}
  \label{eq_leq_psi}
 \end{lemma}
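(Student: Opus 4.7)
The plan is to reduce both quantities $\Psi$ and $\|e_q\|_2$ to elementary functions of a single scalar, namely the half-angle between $q$ and $q_d$, and then verify the claimed bounds as a pair of polynomial inequalities on $[0,1]$.

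First, I would parameterize. Since $q,q_d\in\mathbb{S}^2$, set $c=q_d^T q\in(-1,1]$ on the set $\Psi^{-1}[0,2)$; equivalently let $\theta\in[0,\pi)$ be the angle between $q$ and $q_d$, so $c=\cos\theta$. The half-angle identity $1+\cos\theta=2\cos^2(\theta/2)$ (with $\cos(\theta/2)>0$ on this range) turns the definition (\ref{psi}) into
\begin{equation}
\Psi(q,q_d)=2-2\cos(\theta/2).
\end{equation}

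Next, I would evaluate $\|e_q\|_2$ from (\ref{d_psi}) using the BAC--CAB identity: since $q^T q=1$, $q\times(q\times q_d)=(q^Tq_d)q-q_d=cq-q_d$, whose squared norm is $c^2-2c^2+1=1-c^2=\sin^2\theta$. Substituting into (\ref{d_psi}) and simplifying with $\sin\theta=2\sin(\theta/2)\cos(\theta/2)$ and $\sqrt{2}\sqrt{1+c}=2\cos(\theta/2)$ yields
\begin{equation}
\|e_q\|_2=\sin(\theta/2),\qquad \|e_q\|_2^{\,2}=1-\cos^2(\theta/2).
\end{equation}

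The claim then reduces, after the substitution $u:=\cos(\theta/2)\in(0,1]$, to the two elementary inequalities
\begin{equation}
1-u^{2}\;\le\;2(1-u)\;\le\;2(1-u^{2}),
\end{equation}
i.e.\ $(1-u)(1+u)\le 2(1-u)$ and $2(1-u)\le 2(1-u)(1+u)$. Both factor out $(1-u)\ge0$ and leave $1+u\le 2$ and $1\le 1+u$ respectively, each of which is immediate for $u\in(0,1]$. I would close by noting that the strict lower bound $u>0$ is exactly the exclusion $\Psi<2$, so the inequalities hold on the full set stated.

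I do not foresee a genuine obstacle; the only care needed is handling the square roots correctly by restricting to $\theta\in[0,\pi)$ (guaranteed by $\Psi<2$), which secures $\cos(\theta/2)>0$ so that $\sqrt{1+q_d^Tq}=\sqrt{2}\cos(\theta/2)$ without sign ambiguity, and verifying that the two intermediate factorizations $(1-u)(1+u)$ preserve the direction of the inequality.
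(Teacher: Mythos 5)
Your proposal is correct and follows essentially the same route as the paper's (very terse) proof, which simply invokes the identities $q_d^Tq=\cos\theta$ and $\|q\times(q\times q_d)\|_2=\sin\theta$; you have carried out the half-angle reduction to $\Psi=2-2\cos(\theta/2)$ and $\|e_q\|_2=\sin(\theta/2)$ and verified the resulting polynomial inequalities in $u=\cos(\theta/2)\in(0,1]$, which is exactly the computation the paper leaves implicit.
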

 \begin{proof}
 By using the identity: 
 
 $||q_d^Tq||_2=\cos(\theta)$ and $||q\times(q\times q_d)||_2=\sin(\theta)$, 
 
 where $\theta$ denotes the angle between $q$ and $q_d$. 
 \end{proof}
 Next, in order to define the velocity error on $T_{\mathbb{S}^2}$, we define the transport map $\mathcal{T}_{\mathbb{S}^2}(q,q_d):T_{q_d}\mathbb{S}^2\to T_{q}\mathbb{S}^2$ (\cite{bullo}) as follows.
 \begin{equation}
 \mathcal{T}_{\mathbb{S}^2}(q,q_d).v=(q_d\times v)\times q,~\forall v\in T_{q_d}\mathbb{S}^2
 \label{Tau}
 \end{equation}We make the following observation:
 \begin{lemma}
 The pull-back of the transport map $\mathcal{T}$ satisfies the equation:
  \begin{equation}
    \mathcal{T}_{\mathbb{S}^2}(q,q_d)^*(d_1\Psi_{\mathbb{S}^2}(q,qd))=-d_2\Psi_{\mathbb{S}^2}(q,qd)  
   \end{equation}
  \label{Tau_lemma}
 \end{lemma}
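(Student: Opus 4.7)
My plan is to exploit the manifest symmetry of the error function in its two arguments, then reduce the identity to a straightforward vector-algebraic computation in $\mathbb{R}^3$.

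First, I observe that $\Psi(q,q_d)$ as defined in (\ref{psi}) is symmetric under the exchange $q\leftrightarrow q_d$, since it depends on the arguments only through the scalar $q_d^Tq$. Consequently, the second differential is obtained from the first simply by swapping $q$ and $q_d$ in the expression (\ref{d_psi}):
\begin{equation}
d_2\Psi_{\mathbb{S}^2}(q,q_d)=\dfrac{1}{\sqrt{2}\sqrt{1+q_d^Tq}}\, q_d\times(q_d\times q).
\end{equation}
Next, using the definition of the pull-back of a covector through the pairing $\langle\mathcal{T}^*\alpha,v\rangle=\langle\alpha,\mathcal{T}\cdot v\rangle$ for $v\in T_{q_d}\mathbb{S}^2$, the required identity is equivalent to the scalar statement
\begin{equation}
\bigl\langle q\times(q\times q_d),\,(q_d\times v)\times q\bigr\rangle = -\bigl\langle q_d\times(q_d\times q),\,v\bigr\rangle
\end{equation}
for every $v\in T_{q_d}\mathbb{S}^2$, since the common scalar prefactor cancels.

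The remaining step is to expand both sides via the BAC--CAB identity $a\times(b\times c)=b(a\cdot c)-c(a\cdot b)$. On the left, I would write $q\times(q\times q_d)=(q\cdot q_d)q-q_d$ and $(q_d\times v)\times q=(q\cdot q_d)v-(q\cdot v)q_d$, and take the inner product. On the right, I would write $q_d\times(q_d\times q)=(q_d\cdot q)q_d-q$ and pair it against $v$. In both expansions I will use $\|q\|=\|q_d\|=1$ together with the tangency condition $q_d\cdot v=0$ (which follows from $v\in T_{q_d}\mathbb{S}^2$). The left side collapses to $q\cdot v$ and the right side to $-(-(q\cdot v))=q\cdot v$, which establishes the claim.

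The main obstacle is simply the careful bookkeeping of the repeated cross products: one must remember to apply the tangency constraint $q_d\cdot v=0$ at the very end (not before), and be consistent with the sign convention $(a\times b)\times c=-c\times(a\times b)$. Once this is done, the identity is purely algebraic and no structural argument on $\mathbb{S}^2$ is needed.
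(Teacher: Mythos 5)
Your proposal is correct and follows essentially the same route as the paper, whose proof consists of a single line invoking the triple-product identity $(x\times y)\times z=(z^Tx)y-(z^Ty)x$; you simply make explicit the intermediate steps (the symmetry of $\Psi$ giving $d_2\Psi_{\mathbb{S}^2}(q,q_d)\propto q_d\times(q_d\times q)$, the pull-back pairing, and the use of $q_d\cdot v=0$) that the paper leaves to the reader. The algebra checks out: both sides reduce to $q\cdot v$ after cancellation of the common prefactor.
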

 \begin{proof}
 By using the identity 
 
 $(x\times y)\times z=(z^Tx)y-(z^Ty)x,~\forall x,y,z\in\mathbb{R}^3$.
\end{proof}  
 
 We now define the velocity error vector as:
 \begin{equation}
 e_{\dot{q}}:=\dot{q}-\mathcal{T}_{\mathbb{S}^2}(q,q_d).\dot{q_d}
 \label{eqdot}
 \end{equation}
 The derivative of the error function can be obtained using Lemma \ref{Tau_lemma} as
 \begin{eqnarray}
 \frac{d}{dt}\Psi(q,q_d)&=&d_1\Psi_{\mathbb{S}^2}(q,q_d)\dot{q}+d_2\Psi_{\mathbb{S}^2}(q,q_d)\dot{q}_d \nonumber \\ 
 &=&d_1\Psi_{\mathbb{S}^2}(q,q_d)e_{\dot{q}}
 \label{Psi_dot}
  \end{eqnarray}In order to stabilize the dynamics of $\Psi$, we require that $e_{\dot{q}}$ satisfies:
 \begin{equation}
 e_{\dot{q}}=-k_q d_1\Psi_{\mathbb{S}^2},~k_q>0
 \label{eqdot1}
 \end{equation} Then, $\dot{\Psi}$ can be obtained as 
 \begin{equation}
 \dot{\Psi}=-k_q||e_q||_2^2
 \end{equation}Further, Lemma \ref{eq_leq_psi} asserts that $\Psi$ can be sandwiched between two positive definite quadratic forms in $e_q$, thereby ensuring that the dynamics of $\Psi$ can be bounded as:
  \begin{equation}
  \Psi(q(t),q_d(t))\leq \Psi(q(0),q_d(0))e^{-k_qt}
  \end{equation}
  
Equation (\ref{eqdot1}) can be written as:
 \begin{equation}
 \Omega_1\vec{r}_2-\Omega_2\vec{r}_1=\mathcal{T}_{\mathbb{S}^2}(q,q_d).\dot{q_d}-k_qd_1\Psi_{\mathbb{S}^2}
 \end{equation}Since $span~\{\vec{r}_1,\vec{r}_2\}=T_q\mathbb{S}^2$, the above equation admits a unique solution for $\Omega_1$ and $\Omega_2$ which is given by:

 \begin{equation}
\Omega_d=\begin{bmatrix}
\langle\vec{r}_2,(\mathcal{T}_{\mathbb{S}^2}(q,q_d).\dot{q_d}-k_qd_1\Psi_{\mathbb{S}^2})\rangle\\
\langle-\vec{r}_1,(\mathcal{T}_{\mathbb{S}^2}(q,q_d).\dot{q_d}-k_qd_1\Psi_{\mathbb{S}^2})\rangle
 \end{bmatrix}
 \label{wd}
 \end{equation}
 
 We now construct a control law in order to track a commanded reduced attitude trajectory. 
 
Define:
\begin{eqnarray*}
e_{\Omega}&:=&([\Omega_1,\Omega_2]^T-\Omega_d), \\
f_J(\Omega)&:=&\bigg[\frac{(J_2-J_3)}{J_1}\Omega_2\Omega_3,\frac{(J_3-J_1)}{J_2}\Omega_1\Omega_3\bigg]^T 
\label{definitions_theorem1}
\end{eqnarray*} 
   
 and 
\begin{equation}
U_{\Delta}=\left\lbrace \begin{matrix} \dfrac{e_{\Omega}}{||e_{\Omega}||_2},~ ||e_{\Omega}||_2 >tol \\ \\
\dfrac{e_{\Omega}}{tol},~ ~||e_{\Omega}||_2 \leq tol
\end{matrix} \right\rbrace
\label{U_delta}
\end{equation} where $tol>0$ is a positive constant depending on the slew rate of the torque actuation.

 \begin{theorem}
 Given a reference trajectory $q_d(t)$ which is smooth with bounded derivatives, the control law:
 \begin{eqnarray}
   U(R,\Omega,t):=&&-\alpha\begin{bmatrix}
  \langle d_1\Psi_{\mathbb{S}^2},\vec{r}_2\rangle\\
  \langle d_1\Psi_{\mathbb{S}^2},-\vec{r}_1\rangle 
  \end{bmatrix}- k_{\Omega}e_{\Omega}-f_J(\Omega)\nonumber \\ \nonumber \\  &&+\dot{\Omega}_d-U_{\Delta}(||\tilde{\Omega}||_2+||\tilde{\Omega}||^2_2)\Delta, 
   \label{U}
  \end{eqnarray} ensures that $e_q$ and $e_{\Omega}$ exponentially converge to an arbitrarily small open neighborhood of the origin, for all initial conditions in the open-dense sublevel set $\Psi^{-1}[0,2)$ satisfying:
 \begin{equation}
 ||e_{\Omega}(0)||_2<2\alpha(2-\Psi(0)).
 \label{condition theorem1}
 \end{equation}
 
 Further, the sublevel set $\Psi^{-1}[0,2)$ remains invariant for the closed loop flow of (\ref{q_dynamics}) with the feedback law (\ref{U}). 
 \label{theorem1}
 \end{theorem}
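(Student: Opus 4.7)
I would run a backstepping-style Lyapunov argument with the composite energy
\[
V := \alpha\,\Psi(q,q_d(t)) + \tfrac{1}{2}\|e_\Omega\|_2^2,
\]
exploiting that the virtual control $\Omega_d$ in (\ref{wd}) was designed precisely so that, when $e_\Omega = 0$, $\dot\Psi$ is negative definite. First, substituting $\dot q = \Omega_1\vec r_2 - \Omega_2\vec r_1$ into (\ref{Psi_dot}), decomposing $\tilde\Omega = \Omega_d + e_\Omega$, and using that $\Omega_{d,1}\vec r_2 - \Omega_{d,2}\vec r_1 = \mathcal{T}_{\mathbb{S}^2}(q,q_d)\dot q_d - k_q d_1\Psi_{\mathbb{S}^2}$ by construction, I obtain
\[
\dot\Psi = -k_q\|e_q\|_2^2 \;+\; e_\Omega^\top\!\begin{bmatrix}\langle d_1\Psi_{\mathbb{S}^2},\vec r_2\rangle\\ -\langle d_1\Psi_{\mathbb{S}^2},\vec r_1\rangle\end{bmatrix}.
\]
Differentiating $e_\Omega = \tilde\Omega - \Omega_d$ along (\ref{q_dynamics}) with the feedback (\ref{U}) substituted, the $f_J(\Omega)$ and $\dot\Omega_d$ terms cancel by design, and the key cross term on the right hand side of $\dot e_\Omega$ is exactly the $-\alpha[\cdots]$ block that annihilates the cross term in $\alpha\dot\Psi$. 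Adding the two pieces, the backstepping cancellation delivers
\[
\dot V = -\alpha k_q\|e_q\|_2^2 - k_\Omega\|e_\Omega\|_2^2 + e_\Omega^\top\!\bigl(d - U_\Delta(\|\tilde\Omega\|_2+\|\tilde\Omega\|_2^2)\Delta\bigr).
\]

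The disturbance residual is then controlled by a case analysis on the saturation (\ref{U_delta}) using the bound (\ref{d_bound}). On $\{\|e_\Omega\|_2 > tol\}$ one has $U_\Delta = e_\Omega/\|e_\Omega\|_2$, so Cauchy--Schwarz makes the residual nonpositive; on $\{\|e_\Omega\|_2 \leq tol\}$, optimizing the scalar $x(1 - x/tol)$ caps the residual by $\tfrac{tol}{4}(\|\tilde\Omega\|_2 + \|\tilde\Omega\|_2^2)\Delta$, a quantity that shrinks with $tol$ provided $\tilde\Omega$ stays bounded on the relevant sublevel set. By Lemma \ref{eq_leq_psi}, $V$ is sandwiched between two positive definite quadratic forms in $(\|e_q\|_2,\|e_\Omega\|_2)$ on $\Psi^{-1}[0,2)$, so the negative-definite part of $\dot V$ dominates $-\lambda V$ for some $\lambda > 0$, giving $\dot V \leq -\lambda V + \delta(tol)$ with $\delta(tol)\downarrow 0$. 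A standard comparison lemma then yields exponential convergence of $(e_q,e_\Omega)$ into the ball $\{V \leq \delta(tol)/\lambda\}$, which is arbitrarily small.

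Invariance of $\Psi^{-1}[0,2)$ follows from the hypothesis (\ref{condition theorem1}), which is arranged so that $V(0)$ lies strictly below the barrier value $2\alpha$ (i.e.\ the level set $\Psi = 2$); choosing $tol$ small enough that the terminal ball sits inside $\{V < 2\alpha\}$, monotonic decrease of $V$ across the shell between the terminal ball and the barrier keeps the flow in $\Psi^{-1}[0,2)$, so $d_1\Psi_{\mathbb{S}^2}$ and $\Omega_d$ remain well defined along the trajectory. The step I expect to be most delicate is the residual bound on $\{\|e_\Omega\|_2 \leq tol\}$, where one needs uniform-in-time control of $\tilde\Omega$: since $\Omega_3$ enters the dynamics of $(\Omega_1,\Omega_2)$ but is itself unactuated in the reduced design, I would need to argue that $\Omega_3$ stays bounded on finite sublevel sets of $V$ (e.g.\ via the drag term in $\tau_d$), so that $\delta(tol)$ is genuinely time-uniform and the exponential-to-neighborhood conclusion is not just a finite-horizon statement.
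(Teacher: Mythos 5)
Your proposal follows essentially the same route as the paper's proof: the identical composite Lyapunov function $V_1=\alpha\Psi+\tfrac{1}{2}\|e_\Omega\|_2^2$, the same backstepping cancellation yielding $\dot V_1=-\alpha k_q\|e_q\|_2^2-k_\Omega\|e_\Omega\|_2^2+\langle e_\Omega,\,d-U_\Delta(\|\tilde\Omega\|_2+\|\tilde\Omega\|_2^2)\Delta\rangle$, the same two-case treatment of the saturated robustifying term, the sandwiching of $\Psi$ via Lemma \ref{eq_leq_psi}, and the same barrier argument $V_1(0)<2\alpha$ for invariance of $\Psi^{-1}[0,2)$. The delicate step you flag at the end is handled in the paper exactly as you anticipate --- when $\|e_\Omega\|_2\le tol$, $\tilde\Omega$ lies within $tol$ of the uniformly bounded $\Omega_d$ by the triangle inequality --- and note that the residual bound involves only $\tilde\Omega=[\Omega_1,\Omega_2]^T$, so no separate boundedness argument for $\Omega_3$ is required there.
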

 \begin{proof}
 Consider the Lyapunov function: 
 \begin{equation}
 V_1:=\alpha\Psi+\frac{1}{2}||e_{\Omega}||_2^2.
 \label{v1}
 \end{equation}
 Its derivative along the trajectories of (\ref{q_dynamics}) with the control law (\ref{U}), is obtained using (\ref{Psi_dot}), (\ref{wd}), (\ref{definitions_theorem1}), and (\ref{U_delta}), as:
 \begin{eqnarray}
  \dot{V}_1=&&-\alpha k_q||e_q||_2^2- k_{\Omega}||e_{\Omega}||_2^2 \\ \nonumber &&+\langle e_{\Omega},d-U_{\Delta}(||\tilde{\Omega}||_2+||\tilde{\Omega}||^2_2)\Delta\rangle
  \end{eqnarray}When $||e_{\Omega}||_2>tol$, the inner product term can be shown to be negative using (\ref{d_bound}) along with the Cauchy-Schwarz inequality. When $||e_{\Omega}||_2\leq tol$, a straightforward calculation shows that we can bound the inner product term using (\ref{d_bound}), as:
  
 \begin{equation}
 \langle e_{\Omega},d-U_{\Delta}(||\tilde{\Omega}||_2+||\tilde{\Omega}||^2_2)\Delta\rangle~~\leq tol\Delta(||\tilde{\Omega}||_2+||\tilde{\Omega}||^2_2)
 \end{equation}
 
 Further, since $\dot{q}_d$ is bounded, $\Omega_d$ can be uniformly bounded. When $||e_{\Omega}||_2<tol$, one can show that $||\tilde{\Omega}||_2$ is uniformly bounded within a neighborhood of $\Omega_d$, using the triangle inequality. Therefore, by appropriately selecting the value of $tol$, the inner product term can be bounded above by an arbitrarily chosen constant $\epsilon>0$ as: 
 
 \begin{equation}
 \langle e_{\Omega},d-U_{\Delta}(||\tilde{\Omega}||_2+||\tilde{\Omega}||^2_2)\Delta\rangle~~\leq \epsilon
 \end{equation}
 
 With this, the derivative of $V_1$ can be bounded as:
 \begin{equation}
 \dot{V}_1\leq -\alpha k_q||e_q||_2^2- k_{\Omega}||e_{\Omega}||_2^2+\epsilon
 \label{v1dotfinal}
 \end{equation}From (\ref{eq_leq_psi}) we have,
 \begin{equation}
 -\alpha k_q||e_q||_2^2\leq-\dfrac{\alpha k_q}{2}\Psi
 \end{equation}therefore,
 \begin{equation}
 \dot{V}_1\leq -\beta V_1+\epsilon
 \end{equation}where $\beta=min\big(\frac{k_q}{2},k_{\Omega}\big)$.
 Further, since $\epsilon$ can be arbitrarily defined, $V_1$ can be guaranteed to be strictly monotonically decreasing in $\Psi^{-1}(\bar{\epsilon},2)$ where $\bar{\epsilon}$ can be made arbitrarily small. In this region, 
 \begin{equation}
\alpha \Psi(t)\leq V_1(0)\leq \alpha \Psi(0)+\dfrac{1}{2}||e_{\Omega}(0) ||_2^2
 \end{equation}Therefore, applying the condition (\ref{condition theorem1}), we obtain:
 \begin{equation}
 \alpha\Psi(t)\leq 2\alpha,~\forall \Psi(0)\in(\bar{\epsilon},2),
 \end{equation}Since $\bar{\epsilon}$ was arbitrary, the sublevel set $\Psi^{-1}[0,2)$ remains invariant.
 \end{proof}

 \textbf{Remark 1:}
 In this control design, the four tuning parameters are $k_q,~k_{\Omega},~\alpha,~\Delta$. A higher value of $\alpha$ ensures that the control law does not encounter any discontinuities when $\Psi=2$. Such a high gain may be necessary when the initial angular velocity error is large. $k_q,~k_{\Omega}$ may be chosen to arbitrarily dictate the rate of exponential tracking. Finally, choosing a higher value of $\Delta$ ensures a tighter bound on the asymptotic tracking error. 
 
  \textbf{Remark 2:}
In case a high gain $\alpha$ is not admissible, a possible solution is to mollify the error function $\Psi$ such that $d_1\Psi$ is continuous at $\Psi=2$. For example, one such mollification is the standard error function $\Psi=1-q_d^Tq$. With the same form of control as in (\ref{U}), the derivative of the Lyapunov function $V_1$ is obtained as in (\ref{v1dotfinal}). However in this case, $\dot{V}_1$ may vanish when $\Psi=2$ and $e_{\Omega}=0$. The Lasalle-Yoshizawa theorem (\cite{krstic}) can now be applied to conclude that the limit set of the trajectories is $e_q=0,~e_{\Omega}=0$. It can be seen that $e_q$ may vanish when $q=q_d$ or $q=-q_d$. It is then necessary to show that the undesired equilibrium point $q=-q_d$ is locally unstable (atleast when $\epsilon\approx 0$). 

Consider a function 
$W=2\alpha-V_1$ which vanishes when $q=-q_d$. From the continuity of $\Psi$, it can be shown that in any arbitrarily small neighborhood of $(q,e_{\Omega})=(-q_d,0)$, there exists points $q$ where $2-\Psi>0$. At such points, when $e_{\Omega}$ is small enough, it can be shown that $W>0$. Further, in an open neighborhood of the undesired equilibrium point (excluding it), $\dot{W}=-\dot{V}>0$. Since the complement set of the equilibria is positively invariant, Chetaev's theorem (\cite{khalil}) can be applied to conclude that the undesired equilibria are unstable. Hence, the trajectories of the system converge asymptotically to the stable equilibrium $(\Psi,e_{\Omega})=(0,0)$, for almost all initial conditions. 

Note however, that such analysis may not be valid when $\epsilon$ is significant, thereby further justifying our choice of error function. 
  
 \subsection{Position Tracking Controller on $SE(3)$}
 Let $x_d$ denote a smooth reference trajectory for the position of the center of mass. We assume that $x_d$ and its derivatives are bounded. Let $e_x:=x-x_d$ and $e_v:=\dot{x}-\dot{x}_d$ denote the position and velocity errors. We now design a saturation based feedback law in order to track the position trajectory with bounded thrust. 
 
 \begin{definition}[Definition:]
 Given constants $a$ and $b$ such that $0<a\leq b$, a function $\sigma:\mathbb{R}\to\mathbb{R}$ is said to be a smooth linear saturation function with limits $(a,b)$, if it is smooth and satisfies:
 \begin{enumerate}
 \item $s\sigma(s)>0,~\forall s\neq 0$
 \item $\sigma(s)=s,~\forall |s|\leq a$
 \item $|\sigma(s)|\leq b,~\forall s\in \mathbb{R}$
 \end{enumerate}
 \end{definition}It is well known that such smooth saturation functions exist. For example, consider the integral of a smooth function with compact support, which is constant within a sub-interval of its support (\cite{shakarchi}). Such a function when shifted by a constant, satisfies the conditions in the definition. In practice, one can approximate these functions using polynomials.
 
 Let $\sigma_1$ and $\sigma_2$ be two saturation functions with limits $(a_1,b_1)$ and $(a_2,b_2)$ such that,
\begin{equation}
b_1<\frac{a_2}{2}.
\label{a1b1}
  \end{equation}

 We now define a control law for $\hat{f}$ which is the total vector thrust acting on the rigid body, as follows:
 \begin{equation}
 \hat{f}=\bar{\sigma}(e_x,e_v)+f_d
 \label{fhat}
 \end{equation}
 where
  \begin{equation}
 f_d=m\ddot{x}_d+mge_3,
 \end{equation}

  \begin{equation}
 \bar{\sigma}(e_x,e_v)=-\begin{bmatrix}
 &&\sigma_2\bigg(\frac{k_1}{k_2}e_{v_1}+\sigma_1\bigg(k_2me_{v_1}+k_1e_{x_1}\bigg)\bigg ) \\ \\
 &&\sigma_2\bigg(\frac{k_1}{k_2}e_{v_2}+\sigma_1\bigg(k_2me_{v_2}+k_1e_{x_2}\bigg)\bigg ) \\ \\
 &&\sigma_2\bigg(\frac{k_1}{k_2}e_{v_3}+\sigma_1\bigg(k_2me_{v_3}+k_1e_{x_4}\bigg)\bigg ) 
 \end{bmatrix},
 \label{sigma_bar}
 \end{equation} and  $k_1,~k_2$ are positive constants.
 
 When $fRe_3=\hat{f}$, it can be established from Theorem 2.1 in \cite{teelglobal} that the tracking errors enter the linear region of the saturation functions in finite time, and remain within thereafter. This would ensure that the origin of the tracking errors is exponentially attractive. 
 
 The following Lemma will be subsequently used to demonstrate that the tracking errors enter the linear region in finite time, when the reduced attitude error is sufficiently bounded.
 
 \begin{lemma}
 Let $\sigma_1$ and $\sigma_2$ be saturation functions with limits as prescribed in (\ref{a1b1}). Then, the trajectories of the system
 \begin{eqnarray*}
 \dot{y}_1&=&y_2 \nonumber \\
 m\dot{y}_2&=&-\sigma_2((k_1/k_2)y_2+\sigma_1(k_1y_1+k_2my_2))+\xi(t),
 \end{eqnarray*}enter the linear region of $\sigma_1$ and $\sigma_2$ in a finite time $t_2$ and remain within thereafter if,
  
  $|\xi(t)|<min~((a_2/2)-b_1,a_1),~\forall t>0$. 
 \label{lemma_sigma}
 \end{lemma}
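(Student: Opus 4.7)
The plan is to follow the standard nested-saturation cascade analysis of \cite{teelglobal}. Introduce the auxiliary variable $z := k_1 y_1 + k_2 m y_2$ and the argument of the outer saturation $w_2 := (k_1/k_2) y_2 + \sigma_1(z)$. The task then splits into two stages: first, drive $|w_2|$ below $a_2$ so that $\sigma_2$ acts linearly; second, drive $|z|$ below $a_1$ so that $\sigma_1$ acts linearly. Both stages rely on the properties $s\sigma_i(s)>0$ and that a smooth linear saturation can be taken monotone with $\sigma_i(a_i)=a_i$, so $|\sigma_i(s)| \geq \min\{|s|, a_i\}$ with matching sign.

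For the first stage I would use the Lyapunov candidate $V_a = y_2^2/2$. The key observation is that $|\sigma_1(z)| \leq b_1$, so whenever $(k_1/k_2)|y_2|$ exceeds $b_1$ the argument $w_2$ inherits the sign of $y_2$, and by monotonicity so does $\sigma_2(w_2)$, with $|\sigma_2(w_2)| \geq \min\{(k_1/k_2)|y_2| - b_1,\,a_2\}$. Combined with the hypothesis $|\xi| < (a_2/2)-b_1$ --- which is precisely half of the gap $a_2-2b_1$ --- this yields a uniform negative upper bound on $d|y_2|/dt$ outside the set $\{|y_2| \leq (k_2/k_1)(a_2 - b_1)\}$. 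Finite-time entry into that set follows, and the same boundary estimate shows positive invariance; inside it $|w_2| \leq a_2$ as required.

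Once $\sigma_2$ is in its linear region, substituting $\sigma_2(w_2) = w_2$ into $m\dot y_2$ and combining with $\dot y_1 = y_2$ collapses the two-state system into the decoupled scalar equation $\dot z = -k_2 \sigma_1(z) + k_2 \xi$. A Lyapunov analysis of $V_b = z^2/2$ gives $\dot V_b \leq -k_2|z|(a_1 - |\xi|)$ for $|z| \geq a_1$, which is strictly negative by the hypothesis $|\xi| < a_1$. This yields finite-time entry of $z$ into $[-a_1,a_1]$, and the same inequality at the boundary gives invariance.

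The main obstacle lies in Stage 1: one must ensure inward motion of $y_2$ at the outer boundary \emph{uniformly} in the internal state $z$, whose contribution $\sigma_1(z)$ can have the wrong sign. This is precisely why the hypothesis tightens the requirement on $\xi$ from the naive $|\xi|<a_1$ to $|\xi|<(a_2/2)-b_1$: the latter guarantees that the damping produced by $\sigma_2$ at the Stage-1 boundary strictly dominates the worst-case combined perturbation from $\sigma_1(z)\in[-b_1,b_1]$ and $\xi$, allowing the two stages to be analyzed sequentially without interference.
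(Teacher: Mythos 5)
Your proposal is correct and follows essentially the same two-stage cascade argument as the paper: a quadratic Lyapunov function in $y_2$ to force the outer saturation $\sigma_2$ into its linear region in finite time, followed by a quadratic Lyapunov function in $z=k_1y_1+k_2my_2$ (whose dynamics reduce to $\dot z=-k_2\sigma_1(z)+k_2\xi$) to bring $\sigma_1$ into its linear region, with the hypothesis on $\xi$ used exactly as you describe. The only cosmetic differences are your slightly larger Stage-1 invariant set $\{|y_2|\leq(k_2/k_1)(a_2-b_1)\}$ versus the paper's $\{|y_2|<(k_2/k_1)(a_2/2)\}$, and your explicit (and welcome) flagging of the monotonicity property of the saturations that both arguments implicitly use.
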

 
 \begin{proof}
 Let $w_1=my_2^2$. We obtain,
 
 \begin{equation}
 \dot{w}_1=2y_2(-\sigma_2((k_1/k_2)y_2+\sigma_1(k_1y_1+k_2my_2))+\xi(t))
 \end{equation} 
 
 When $|y_2|\geq(k_2/k_1)(a_2/2)$, using the bound on $\xi$ and (\ref{a1b1}), we can see that $\dot{w}_1$ is uniformly negative definite.
 
Hence,  $\exists t_1>0,~ \ni |y_2(t)|<(k_2/k_1)(a_2/2),~ \forall t>t_1$. 
 
 Using the bound on $b_1$, we conclude that $\sigma_2$ operates in its linear region after $t_1$. 
 
Let $w_2=||(k_1y_1+k_2my_2)||_2^2$. When $t>t_1$, its derivative is obtained as,

\begin{equation}
\dot{w}_2=-2(k_1y_1+k_2my_2)(\sigma_1(k_1y_1+k_2my_2)+\xi(t))
\end{equation}

From the definition of $\sigma_1$ and the bound on $\xi$, $w_2$ is uniformly negative definite when $|k_1y_1+k_2my_2|\geq a_1$. 

Hence, $\exists t_2>t_1>0,~ \ni |k_1y_1+k_2my_2|<a_1,~ \forall t>t_2$. 

It can there be concluded that $\sigma_1$ and $\sigma_2$ operate in their respective linear regions after $t_2$. 
 \end{proof}

We now define the commanded reduced attitude trajectory as:
 \begin{equation}
 q_d=\dfrac{\hat{f}}{||\hat{f} ||_2}
 \label{qd_command}
 \end{equation}
 
 This is well defined when $||\hat{f} ||_2$ is bounded away from zero. One way to ensure this is to choose a bound on $b_2$ as:
 
 \begin{equation*}
~b_2<\inf\limits_{t>0}\{||f_d(t) ||_{\infty}\}.
 \end{equation*}
 
 The control law for the net thrust $f$ is then chosen as:
 \begin{equation}
 f=~\langle\hat{f},Re_3\rangle.
 \label{f}
 \end{equation}

 \begin{theorem}
 Consider the control law for $U$ and $f$ as given in (\ref{U}) and (\ref{f}) such that the condition (\ref{condition theorem1}) is satisfied. Further, define the matrices:
 \begin{eqnarray}
 W_1&=&\begin{bmatrix}
 \frac{ck_x}{m}(1-\sin(\theta_0)) & -\frac{ck_v}{2m}(1+\sin(\theta_0)) \\
 -\frac{ck_v}{2m}(1+\sin(\theta_0)) & k_v(1-\sin(\theta_0))-c,
  \end{bmatrix},\nonumber
 \\\nonumber
 \\
 W_2&=&2\begin{bmatrix}
 (c/m)||f_d ||_2 & 0 \\ \\  a_1+||f_d ||_2 & 0 
 \end{bmatrix}.
 \label{wmatrices}
\end{eqnarray} 


 Given $0<k_x:=k_1$, $0<k_v:=(k_1/k_2)+k_2$, and $\theta_0<\pi/2$, we choose  positive constants $c$, $k_q$, $k_{\Omega}$, such that 
\begin{small}
\begin{eqnarray}
&&c<min\bigg\{ k_xk_v(1-\sin(\theta_0))^2\bigg(k_x(1-\sin(\theta_0)) \nonumber \\  &&+\frac{k_v^2(1+\sin(\theta_0))^2}{4m} \bigg)^{-1}, k_v(1-\sin(\theta_0)), \sqrt{k_x/m}~\bigg\},~  \nonumber \\
&&min~(\alpha k_q,k_\Omega)>\frac{4||W_2 ||^2}{\lambda_{min}(W_1)}.
\label{condition on theorem2}
\end{eqnarray}
\end{small}
Then, the tracking errors $e_x$, $e_v$, $e_q$, $e_{\Omega}$, exponentially converge to an arbitrarily small open neighborhood of the origin, for all initial conditions lying in an open-dense subset. 
 \end{theorem}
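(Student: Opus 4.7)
The plan is to combine the reduced-attitude result of Theorem 1 with a saturation-based analysis of the translational dynamics via a composite Lyapunov argument. The key observation is that the force applied to the center of mass is $fRe_3 = \langle \hat{f}, Re_3\rangle Re_3$, which coincides with the commanded $\hat{f}$ precisely when $q = q_d = \hat{f}/\|\hat{f}\|_2$, and otherwise differs by a perturbation directly controlled by the reduced-attitude error.

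First I would write the translational error dynamics by subtracting $f_d = m\ddot{x}_d + mge_3$ as
\begin{equation*}
m\dot{e}_v = \bar{\sigma}(e_x,e_v) - X, \qquad X := \hat{f} - \langle \hat{f},q\rangle q,
\end{equation*}
so that $X$ is the projection of $\hat{f}$ onto the plane orthogonal to $q$, with $\|X\|_2 \le \|\hat{f}\|_2 \sin\theta$ and $\sin\theta$ linearly bounded by $\|e_q\|_2$ via (\ref{d_psi}). Since $\bar{\sigma}$ is bounded and $\|\hat{f}\|_2$ is bounded below (through the constraint $b_2 < \inf_t\|f_d\|_\infty$), the reference $q_d$ in (\ref{qd_command}) is smooth with bounded derivatives, so Theorem 1 applied to the attitude loop drives $(e_q,e_\Omega)$ exponentially to an arbitrarily small neighbourhood of zero. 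By tightening the saturation limits and choosing $\theta_0 < \pi/2$ appropriately, after a finite time the perturbation satisfies $\|X\|_\infty < \min((a_2/2) - b_1, a_1)$, and Lemma \ref{lemma_sigma} applies component-wise so that the translational errors enter the linear region of $\sigma_1,\sigma_2$ in finite time.

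Inside the linear region $\bar\sigma(e_x,e_v) = -k_v e_v - k_x e_x$. I would then use the composite Lyapunov function $V = V_1 + V_2$, with $V_1 = \alpha\Psi + \tfrac{1}{2}\|e_\Omega\|_2^2$ from Theorem 1 and
\begin{equation*}
V_2 = \tfrac{1}{2} k_x\|e_x\|_2^2 + \tfrac{m}{2}\|e_v\|_2^2 + c\, e_x^T e_v .
\end{equation*}
A Schur-complement argument shows that the upper bound on $c$ in (\ref{condition on theorem2}) makes $V_2$ positive-definite and its derivative along the unperturbed closed loop a negative-definite form governed by $W_1$; the $\sin\theta_0$ factors absorb the residual misalignment between $\hat{f}$ and $q$ during the transient. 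The perturbation $X$ injects bilinear cross-terms between $(e_x,e_v)$ and $(e_q,e_\Omega)$, whose coefficients are precisely those collected in $W_2$. Applying Young's inequality to absorb these cross-terms yields
\begin{equation*}
\dot V \le -\gamma_1 \|(e_x,e_v)\|_2^2 - \gamma_2 \|(e_q,e_\Omega)\|_2^2 + \epsilon,
\end{equation*}
where $\gamma_1,\gamma_2>0$ exactly when the second line of (\ref{condition on theorem2}) holds. Sandwiching $V$ between quadratic forms in the full error vector (using Lemma \ref{eq_leq_psi} for the $\Psi$ term) produces $\dot V \le -\mu V + \epsilon'$, and hence exponential convergence to an $O(\epsilon')$ neighbourhood of the origin.

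The main obstacle is the cascaded coupling: $q_d$ depends on $\hat{f}$ which depends on $(e_x,e_v)$, while the bound on $X$ driving Lemma \ref{lemma_sigma} requires the attitude error to already be small. One must therefore verify a priori that $\hat{f}$, $\dot{\hat{f}}$ and $\ddot{\hat{f}}$ remain uniformly bounded before attitude convergence completes, and that the sublevel set $\Psi^{-1}[0,2)$ remains invariant throughout the transient so that Theorem 1 keeps applying. The open-dense subset in the conclusion is then the intersection of the attitude initial-condition set (\ref{condition theorem1}) with the open set of initial conditions on which $\hat{f}$ stays bounded away from zero along the resulting trajectory.
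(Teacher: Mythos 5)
Your proposal follows essentially the same route as the paper: the perturbation $X=\hat{f}-\langle\hat{f},q\rangle q$ is exactly the paper's $-\mathcal{E}=-f(q-q_d/(q_d^Tq))$ with $\|X\|_2=\|\hat{f}\|_2|\sin\theta|\le 2\|\hat{f}\|_2\|e_q\|_2$, and the subsequent steps (Theorem \ref{theorem1} to shrink $\sin\theta$ below $\min(\delta/B,\sin\theta_0)$, Lemma \ref{lemma_sigma} to enter the linear region in finite time, the composite Lyapunov function $V=V_1+V_2$ with the same cross-term $ce_x^Te_v$, and the positive-definiteness of the resulting quadratic form in $(z_1,z_2)$ under (\ref{condition on theorem2})) match the paper's proof. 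The cascaded-coupling caveat you raise at the end is also handled the same way the paper handles it, via the lower bound on $\|\hat{f}\|_2$ from $b_2<\inf_t\|f_d\|_\infty$ guaranteeing $q_d$ and its derivatives stay bounded during the transient.
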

 \begin{proof}
 Assuming that $\theta<\frac{\pi}{2}$, the dynamics of $e_v$ can be written using (\ref{quad}) as:
 \begin{equation}
 m\dot{e}_v=-mge_3-m\ddot{x}_d+\dfrac{f}{q_d^Tq}q_d+\mathcal{E}
 \end{equation}where $\mathcal{E}\in \mathbb{R}^3$ is defined as
 \begin{equation}
 \mathcal{E}=f\bigg(q- \dfrac{q_d}{q_d^Tq} \bigg)
 \label{E}
 \end{equation}Further, we can write
 \begin{equation}
 \dfrac{f}{q_d^Tq}q_d=\dfrac{||\hat{f}||_2q_d^Tq}{q_d^Tq}.\dfrac{\hat{f}}{||\hat{f}||_2}=\hat{f}
 \end{equation}Hence, using (\ref{fhat}) we can write:
 \begin{equation}
 m\dot{e}_v=\bar{\sigma}(e_x,e_v)+\mathcal{E}
 \label{evdot}
 \end{equation}
 
From (\ref{E}), we can bound $\mathcal{E}$  as
\begin{equation}
\mathcal{E}\leq ||\hat{f}||_2||((q_d^Tq)q-q_d)||_2
\end{equation}

where the term $||((q_d^Tq)q-q_d)||_2 =|\sin(\theta)|$. Further, the commanded thrust $\hat{f}$ can be bounded using the saturation limit $b_2$ as
\begin{equation}
||\hat{f}||_2\leq\sqrt{3}b_2+\sup\limits_{t>0}\{||f_d(t)||_2\}:=B
\end{equation}

From Theorem \ref{theorem1} we know that,
\begin{equation}
\exists t_0>0,~ \ni |\sin(\theta)|< min\bigg(\dfrac{\delta}{B},|\sin(\theta_0)|\bigg),~\forall t>t_0,
\label{t0 definition}
\end{equation}

This implies that $||\xi(t)||_{\infty}<\delta,~\forall t>t_0$, 

where $\delta:=min~((a_2/2)-b_1,a_1)$.

Using Lemma \ref{lemma_sigma}, we can then conclude that the error dynamics $e_x$ and $e_v$ operate in the linear region of $\sigma_1$ and $\sigma_2$ after a finite time $t_0+t_2$. Further, if $||\hat{f}||_2$ is bounded away from zero, the reference trajectory $q_d$ is well defined and its derivatives are bounded. Therefore, the trajectories of (\ref{quad}) remain bounded in $(0,t_0+t_2)$.

In the linear region, the dynamics of $e_v$ can be written as:
\begin{equation}
m\dot{e}_v=-k_xe_x-k_ve_v+\mathcal{E}
\label{evdotlinear}
\end{equation} 

We choose a Lyapunov function candidate for the translational dynamics as
\begin{equation}
V_2:=\dfrac{1}{2}k_x||e_x||_2^2+\dfrac{1}{2}m||e_v ||_2^2 +ce_x^Te_v
\end{equation}Its derivative along the flow of (\ref{evdotlinear}) is obtained as
\begin{equation}
\dot{V}_2=-(k_v-c)|| e_v||_2^2-\frac{ck_x}{m}||e_x||_2^2-\frac{ck_v}{m}+\mathcal{E}^T\bigg(\frac{c}{m}e_x+e_v\bigg)
\label{v2dot1}
\end{equation} From (\ref{t0 definition}) we observe that,
\begin{equation}
||((q_d^Tq)q-q_d)||_2<1,~ \forall t>t_0
\end{equation}

Further, from the identity given in the proof of Lemma \ref{eq_leq_psi}, we observe that
\begin{eqnarray}
&&||((q_d^Tq)q-q_d)||_2\leq 2||e_q||_2  
\end{eqnarray}
This can be substituted in (\ref{v2dot1}) to obtain:
\begin{small}
\begin{eqnarray}
\dot{V}_2\leq  -(k_v(1-\sin(\theta))-c )||e_v||_2^2-\frac{ck_x}{m}(1-\sin(\theta))||e_x||^2_2 \nonumber \\
+\frac{ck_v}{m}(1+\sin(\theta))||e_x||_2||e_v||_2 \label{v2dotfinal} \\
+2||e_q||_2\bigg(k_x||e_x||_2||e_v||_2+\frac{c}{m}||f_d||_2||e_x||_2+||f_d||_2||e_v||_2\bigg)\nonumber
\end{eqnarray}
\end{small}In the linear region of the saturation functions, we can bound the cubic term in the above equation as: 
\begin{equation}
||e_q||_2k_x||e_x||_2||e_v||_2\leq a_1||e_q||_2||e_v||_2.
\label{cubicbound}
\end{equation}

Consider a Lyapunov function candidate for the complete dynamics as
\begin{equation}
V=V_1+V_2
\label{V}
\end{equation}where $V_1$ is defined as in (\ref{v1}).

Define $z_1=[||e_x||_2,||e_v||_2]^T$ and $z_2=[||e_q||_2,||e_{\Omega}||_2]^T$.

We can bound $V$ between two quadratic forms using Lemma \ref{eq_leq_psi} as:
\begin{equation}
z_1^TM_1z_1+z_2^TM_2z_2\leq V\leq z_1^TM_3z_1+z_2^TM_4z_2
\end{equation}where
\begin{equation}
\begin{matrix}
M_1=\frac{1}{2}\begin{bmatrix}
k_x & -c \\
-c & m
\end{bmatrix}, ~M_3=\frac{1}{2}\begin{bmatrix}
k_x & c \\
c & m
\end{bmatrix},\\ \\
M_2=\frac{1}{2}\begin{bmatrix}
2\alpha & 0 \\
0 & 1
\end{bmatrix},~M_4=\frac{1}{2}\begin{bmatrix}
4\alpha & 0 \\
0 & 1
\end{bmatrix}
\end{matrix}
\end{equation}From (\ref{v1dotfinal}), (\ref{v2dotfinal}),  (\ref{cubicbound}) and (\ref{V}), the derivative of $V$ can be obtained as:
\begin{equation}
\dot{V}\leq -Q(e_x,e_v,e_q,e_{\Omega})+\epsilon
\end{equation}

where 
\begin{equation}
Q(e_x,e_v,e_q,e_{\Omega})=z_1^TW_1z_1-z_1^TW_2z_2+z_2^TW_3z_2
\end{equation}

and
$W_3=\begin{bmatrix}
\alpha k_q & 0 \\
0 & k_{\Omega}
\end{bmatrix}$, $W_1$ and $W_2$ are as given in (\ref{wmatrices}).

Using the conditions in (\ref{condition on theorem2}), we observe that $ Q$ is a positive definite quadratic form and $V$ is sandwiched between two positive definite quadratic forms.
Hence, after a finite time $t_0+t_2$, the errors $[e_x,~e_v,~e_q,~e_{\Omega}]$ exponentially converge to an arbitrarily small open neighborhood of the origin.

 \end{proof}

\textbf{Remark 1:} By using saturated thrust feedback, it was possible to bound the error $\mathcal{E}$ in the translational dynamics, by sufficiently decreasing the reduced attitude error in the initial phase $t<t_0$. This was essential in order to ensure that the position errors decrease into the linear region, in finite time. This also allowed us to to bound the cubic term as (\ref{cubicbound}), which resulted in exponential stability. In \cite{tleeasme}, the authors restrict the stability analysis of the translational dynamics to a domain where $e_x$ is bounded. However, in order to remain within this domain the total system errors need to be further bounded, rendering the overall stability only local. In \cite{tlee}, the authors attempt to bound the velocity error $e_v$. However, such analysis is valid only when the gain $k_x$ is uniformly zero, failing which there can be no tractable bound on $e_v$. The stability analysis with the proposed control law in this paper is not restricted by any such conditions. 

 \textbf{Remark 2:} The conditions of the theorem dictate that the attitude tracking gains need to be high enough so that the translational errors enter the linear region of the saturation function. The limits chosen for the saturation functions are quite conservative, to ensure that the commanded thrust vector $\hat{f}$ is bounded away from the origin. This ensures that $\dot{q}_d$ and $\ddot{q}_d$ are bounded, thereby bounding the required torque. In practice however, one may further relax this limit within rotor thrust saturation.

 \section{Numerical Simulations}
Simulations were carried out on a variable pitch quadrotor which is capable of negative thrust, and a conventional quadrotor with positive rotor thrust constraint. Subsequent to rotor failure, the quadrotor was required to track a figure-of-8 trajectory while initially recovering from a downward facing pose. 
\subsection{Variable Pitch Quadrotor} 
 
The parameters of the quadrotor chosen for simulation are $m=1kg,~\omega_i=600 ~rad/s$ and $b_L=3.2\times 10^{-6}$, and the inertia matrix as

$J=\begin{bmatrix}
0.0972   & 0.0194   & 0.0195 \\
    0.0194   & 0.0974   & 0.0317 \\
    0.0195   & 0.0317 &   0.1584
\end{bmatrix}$. 

The nominal inertial matrix for control design was chosen as 
$J_0=diag(0.081,0.0812,0.1320)$. The propeller inertia was chosen as $J_r=5\times 10^{-5}$, and the rotational drag coefficient matrix was chosen as $K_d=diag(0.7,0.7,1.4)\times 10^{-4}$. 

The control gains were chosen as $k_q=8,~k_{\Omega}=10,~k_x=2,~k_v=3,~\Delta=3\times 10^{-3},~tol=10^{-3}$.

The reference position trajectory was chosen as a figure of '8' curve at constant altitude i.e.

$x_d(t)=2[\sin(2t),\cos(2t),5]^T$

The initial conditions were chosen as $x(0)=[5,5,5]$, $\dot{x}(0)=0$, $\Omega(0)=0$, and an initial orientation as a $140^{\circ}$ rotation about the $x ~\mathrm{axis}$ as:

$R(0)=\begin{bmatrix}
1 & 0 & 0 \\
0 & \cos(140^{\circ}) & -\sin(140^{\circ})\\
0 & \sin(140^{\circ}) & \cos(140^{\circ})
\end{bmatrix}$

 \begin{figure}[h]
\includegraphics[width=0.5\textwidth]{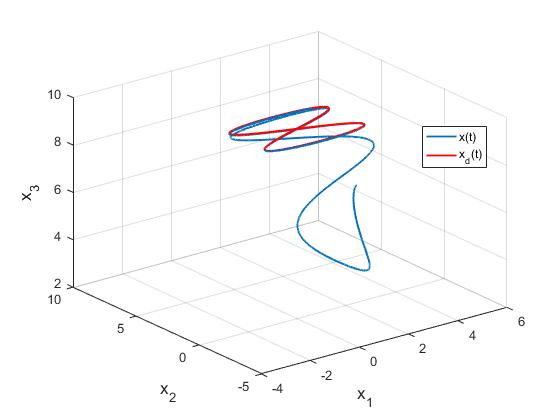}
\caption{Trajectory tracking after recovering from downward facing pose}
\label{fig:3d_1}
\end{figure}

 \begin{figure}[h]
\includegraphics[width=0.5\textwidth]{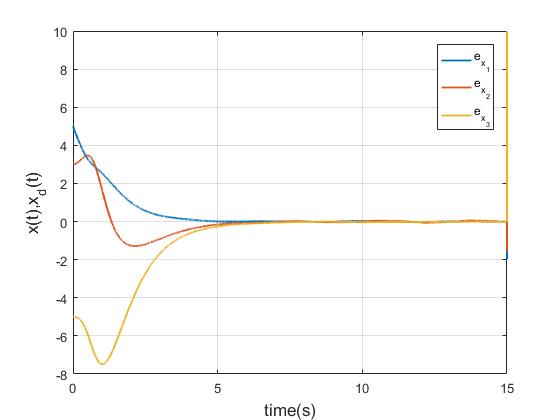}
\caption{Position error $e_x$ during the maneuver}
\label{fig:ex_1}
\end{figure}

 Fig.\ref{fig:3d_1} shows the quadrotor tracking a 'figure-of-8' reference trajectory after recovering from an inverted pose. Initially, when the reduced attitude error is large, there is a transient deviation from the reference trajectory. This can be seen in the position error plot in  Fig.\ref{fig:ex_1}. From this plot, it can also be seen that the tracking errors exponentially decrease and are bounded within an arbitrarily small open ball.

  \begin{figure}[h]
\includegraphics[width=0.5\textwidth]{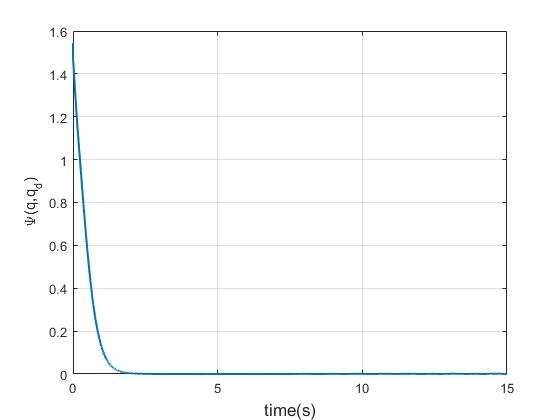}
\caption{Reduced attitude error $\Psi(q,q_d)$ during the maneuver}
\label{fig:psi_1}
\end{figure}
   
   Fig.\ref{fig:psi_1} shows the evolution of the reduced attitude error function $\Psi(q,q_d)$ during the maneuver. It can be seen that $\Psi$ decreases exponentially to an arbitrarily small open ball.
  
  \begin{figure}[h]
\includegraphics[width=0.5\textwidth]{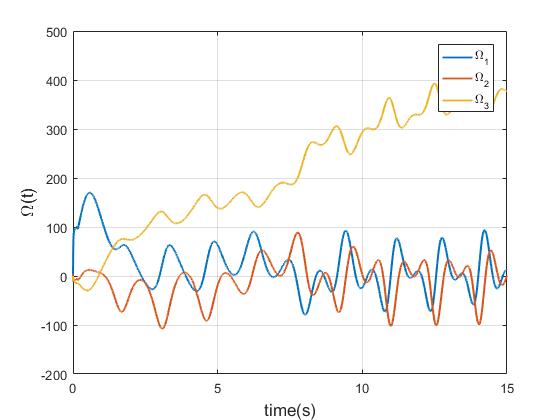}
\caption{Angular velocity $\Omega$ in $deg/sec$ during the maneuver}
\label{fig:omega_1}
\end{figure}  

Fig.\ref{fig:omega_1} shows the angular velocity about the three body axes during the maneuver. It can be seen that the angular velocity about the body $z$ axis increases rapidly and saturates due to rotational drag.

  \begin{figure}[h]
\includegraphics[width=0.5\textwidth]{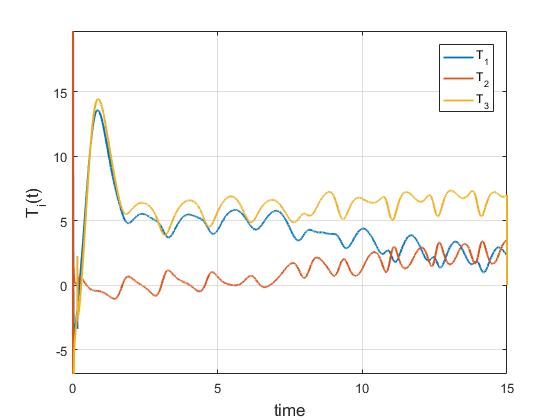}
\caption{Rotor thrusts $T_1,~T_2,~T_3$ during the maneuver}
\label{fig:thrust_1}
\end{figure} 

 Fig.\ref{fig:thrust_1} shows the variation of the thrust generated by the three functioning rotors during the maneuver. 

\subsection{Quadrotor with Positive Thrust Constraint}
The control law was simulated on a similar quadrotor where the rotor thrusts were constrained to be strictly positive. It was observed that when  $\Omega_3$ was sufficiently high, the control law was successfully able to execute the  attitude recovery and tracking maneuver. It was also observed that in case of large initial attitude errors, the tracking failed when the initial angular velocity $\Omega_3(0)$ was low. 
Similar parameters were chosen, except for a mass $m=3kg$ and initial angular velocity $\Omega_3(0)=2\pi ~rad/s$. 

 \begin{figure}[h]
\includegraphics[width=0.5\textwidth]{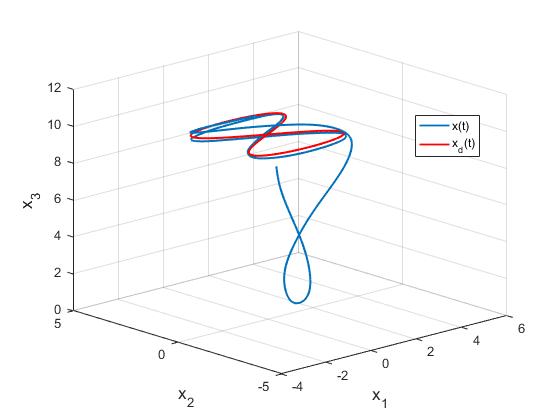}
\caption{Attitude recovery and position tracking with positive rotor thrust constraint}
\label{fig:3d_2}
\end{figure}

 \begin{figure}[h]
\includegraphics[width=0.5\textwidth]{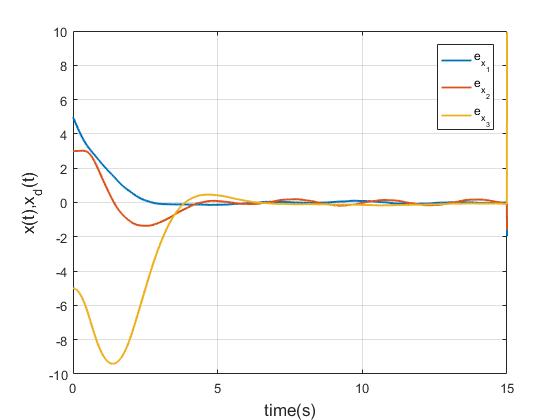}
\caption{Position error with positive rotor thrust constraint}
\label{fig:ex_2}
\end{figure}

Fig.\ref{fig:3d_2} and Fig.\ref{fig:ex_2} show larger transients  as compared with Fig.\ref{fig:3d_1} and Fig.\ref{fig:ex_1}, which is due to thrust saturation.

  \begin{figure}[h]
\includegraphics[width=0.5\textwidth]{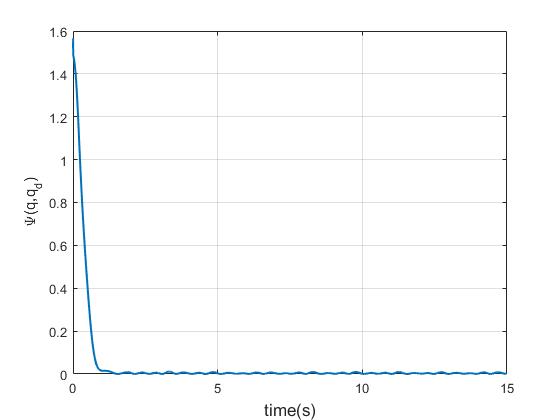}
\caption{Reduced attitude error $\Psi(q,q_d)$ with positive rotor thrust constraint}
\label{fig:psi_2}
\end{figure}
   
   Fig.\ref{fig:psi_2} shows fluctuations while stabilizing the reduced attitude, and a persistent error. This is due to the fact that when one rotor fails, the torque generated about one of the horizontal axes is strictly positive, which may lead to actuation error. The fluctuations and asymptotic errors can be further decreased by maintaining a higher $\Omega_3$ as shown in Fig.\ref{fig:omega_2}.
  
  \begin{figure}[h]
\includegraphics[width=0.5\textwidth]{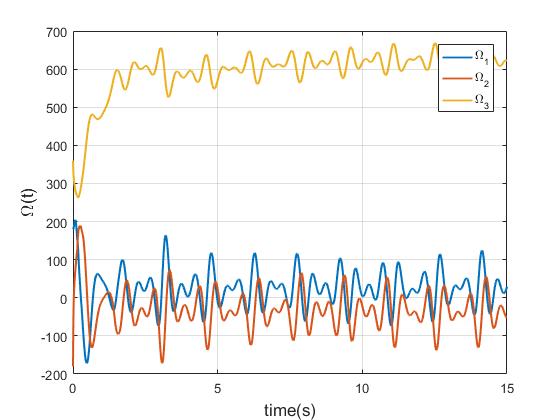}
\caption{Angular velocity $\Omega$ in $deg/sec$ with positive rotor thrust constraint}
\label{fig:omega_2}
\end{figure}  

  \begin{figure}[h]
\includegraphics[width=0.5\textwidth]{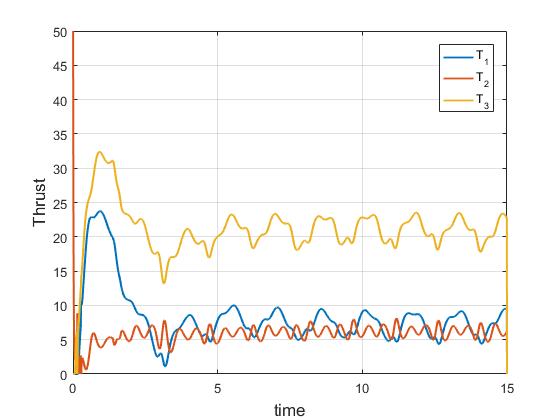}
\caption{Constrained rotor thrusts $T_1,~T_2,~T_3$ }
\label{fig:thrust_2}
\end{figure} 

Fig.\ref{fig:thrust_2} shows that the rotor thrusts operate within their constraints, and initially saturate when the attitude error is large. 

\textbf{Practical Considerations in
 Conventional Quadrotors:}

In conventional quadrotors, the rotor thrusts are constrained to be strictly positive and consequently the torque about one of the horizontal axes (say $U_1$) as well. Due to this, the controller performance can suffer due to large actuation error. A possible solution is to design a nominal trajectory $x_d(t)$ accounting for the initial conditions, such that $U_1$ is positive and uniformly bounded away from zero along this trajectory. This is possible considering that the position $x$ is a flat output of the dynamics on $SE(3)/SO(2)$. In \cite{andrea}, the authors discuss various periodic nominal trajectories satisfying the positive torque condition, about which they linearize the dynamics. From the exponential attractiveness of the geometric control law, it can be shown that if the tracking gains are appropriately chosen, the system trajectories will remain close to the nominal trajectory. 
As discussed in \cite{andrea}, such nominal trajectories require the angular velocity $\Omega_3$ to be significantly high. Post rotor failure, this angular velocity needs to be sufficiently increased before executing the maneuver. It is therefore essential to use high bandwidth attitude sensors (such as the MPU6050 DMP) and actuators. Further in conventional quadrotors, the tracking performance is improved if the ratio of the mass to inertia about the body $z$ axis, is sufficiently high. This ensures that in the initial phase where the angular velocity is increased, the translation errors do not grow significantly due to parasitic thrust. 

\section{Concluding Remarks}

We proposed a fault tolerant geometric control law for a quadrotor, subsequent to complete failure of a single rotor. It was demonstrated that unlike existing fault tolerant control laws, the quadrotor was able to perform aggressive maneuvers such as attitude recovery from an inverted pose and nontrivial trajectory tracking. This was primarily achieved by exploiting the geometric structure of the reduced configuration manifold, and designing a control law which was free of singularities which inhibit the performance envelop of the UAV. The back-stepping geometric control law for reduced attitude control also enabled reduced attitude tracking at arbitrarily high rates, which was essential for inhibiting transients and enhancing tracking performance.  While implementing the control law on a conventional quadrotor where the rotor thrusts are strictly positive, the angular rate about the body $z$ axis needs to be significantly high when the attitude error is large. Hence, when a fault in one of the rotors is detected, this angular rate needs to be first sufficiently increased before initiating the reduced attitude and position tracking maneuver.

\bibliographystyle{IEEEtran}
\bibliography{attitude_bib}

\end{document}